\theoremstyle{plain} 
\newtheorem{theorem}{\indent\sc Theorem}[section]
\newtheorem{lemma}[theorem]{\indent\sc Lemma}
\newtheorem{proposition}[theorem]{\indent\sc Proposition}
\theoremstyle{definition} 
\newtheorem{definition}[theorem]{\indent\sc Definition}
\newtheorem{remark}[theorem]{\indent\sc Remark}
\newtheorem{example}[theorem]{\indent\sc Example}
\title{On warped product gradient $\eta$-Ricci solitons} 
\author{Adara M. Blaga}
\date{}
\begin{document}

\maketitle

\markboth{{\small\it {\hspace{8cm} Warped product gradient $\eta$-Ricci solitons}}}{\small\it{Warped product gradient $\eta$-Ricci solitons
\hspace{8cm}}}

\footnote{ 
2010 \textit{Mathematics Subject Classification}.
Primary 53C21, 53C44; Secondary 53C25.
}
\footnote{ 
\textit{Key words and phrases}.
gradient $\eta$-Ricci solitons, scalar curvature, Laplacian equation.
}

\begin{abstract}
If the potential vector field of an $\eta$-Ricci soliton is of gradient type, using Bochner formula, we derive from the soliton equation a Laplacian equation satisfied by the potential function $f$. In a particular case of irrotational potential vector field we prove that the soliton is completely determined by $f$. We give a way to construct a gradient $\eta$-Ricci soliton on a warped product manifold and show that if the base manifold is oriented, compact and of constant scalar curvature, the soliton on the product manifold gives a lower bound for its scalar curvature.
\end{abstract}

\section{Introduction}

\textit{Ricci flow}, introduced by R. S. Hamilton \cite{ham2}, deforms a Riemannian metric $g$ by the evolution equation $\frac{\partial }{\partial t}g=-2S$, called the "heat equation" for Riemannian metrics, towards a canonical metric. Modeling the behavior of the Ricci flow near a singularity, \textit{Ricci solitons} \cite{ham} have been studied in the contexts of complex, contact and paracontact geometries \cite{be}.

The more general notion of \textit{$\eta$-Ricci soliton} was introduced by J. T. Cho and M. Kimura \cite{ch} and was treated by
C. C\u alin and M. Crasmareanu on Hopf hypersurfaces in complex space forms \cite{cacr}. We also discussed some aspects of $\eta$-Ricci solitons in paracontact \cite{bl}, \cite{blag} and Lorentzian para-Sasakian geometry \cite{bla}.

A particular case of soliton arises when the potential vector field is the gradient of a smooth function. The gradient vector fields play a central r\^{o}le in the Morse-Smale theory \cite{ms}. G. Y. Perelman
showed that if the manifold is compact, then the Ricci soliton is gradient \cite{pe}. In \cite{ha}, R. S. Hamilton
conjectured that a compact gradient Ricci soliton on a manifold $M$ with positive curvature operator
implies that $M$ is Einstein manifold. In \cite{de}, S. Deshmukh proved that a Ricci soliton of positive
Ricci curvature and whose potential vector field is of Jacobi-type, is compact and therefore, a gradient Ricci soliton.
Different aspects of gradient Ricci solitons
were studied in various papers. In \cite{ba}, N. Basu and A. Bhattacharyya treated gradient
Ricci solitons in Kenmotsu manifolds having Killing potential vector field. P. Petersen and W. Wylie discussed the rigidity of
gradient Ricci solitons \cite{pet2} and gave a classification imposing different curvature conditions \cite{pet1}.

The aim of our paper is to investigate some properties of gradient $\eta$-Ricci solitons. After deducing some results derived from the Bochner formula, we construct a gradient $\eta$-Ricci soliton on a warped product manifold and for the particular case of product manifolds, we show that if the base is oriented, compact and of constant scalar curvature, then we obtain a lower bound for the scalar curvature of the product manifold.

\section{Bochner formula revisited}

Let $(M,g)$ be an $m$-dimensional Riemannian manifold and consider $\xi$ a gradient vector field on $M$.
If $\xi:=grad{(f)}$, for $f$ a smooth function on $M$, then the $g$-dual $1$-form $\eta$ of $\xi$ is closed, as $\eta(X):=g(X,\xi)=df(X)$. Then $0=(d\eta)(X,Y):=X(\eta(Y))-Y(\eta(X))-\eta([X,Y])=g(\nabla_X\xi,Y)-g(\nabla_Y\xi, X)$, hence:
\begin{equation}\label{e41}
g(\nabla_X\xi,Y)=g(\nabla_Y\xi, X),
\end{equation}
for any $X$, $Y\in \chi(M)$, where $\nabla$ is the Levi-Civita connection of $g$.

Also:
\begin{equation}\label{e42}
div(\xi)=\Delta(f)
\end{equation}
and
\begin{equation}\label{e43}
div(\eta):=trace(Z\mapsto \sharp((\nabla\eta)(Z,\cdot)))=\sum_{i=1}^{m}(\nabla_{E_i}\eta)E_i=
\sum_{i=1}^{m}g(E_i,\nabla_{E_i}\xi):=div(\xi),
\end{equation}
for $\{E_i\}_{1\leq i\leq m}$ a local orthonormal frame field with $\nabla_{E_i}E_j=0$ in a point. From now on,
whenever we make a local computation, we will consider this frame.\\

In this case, the Bochner formula becomes:
\begin{equation}
\frac{1}{2}\Delta(|\xi|^2)=|\nabla \xi|^2+S(\xi,\xi)+\xi(div(\xi)),
\end{equation}
where $S$ is the Ricci curvature of $g$. Indeed:
\begin{equation}\label{e46}
(div(\mathcal{L}_{\xi}g))(X):=trace(Z\mapsto \sharp((\nabla(\mathcal{L}_{\xi}g))(Z,\cdot,X)))=\sum_{i=1}^{m}(\nabla_{E_i}(\mathcal{L}_{\xi}g))(E_i,X)=
\end{equation}
$$=\sum_{i=1}^{m}
\{E_i((\mathcal{L}_{\xi}g)(E_i,X))-(\mathcal{L}_{\xi}g)(E_i,\nabla_{E_i}X)\}=
2\sum_{i=1}^{m}g(\nabla_{E_i}\nabla_X\xi-\nabla_{\nabla_{E_i}X}\xi,E_i):=$$
$$:=2\sum_{i=1}^{m}g(\nabla^2_{E_i,X}\xi,E_i)=2\sum_{i=1}^{m}g(\nabla^2_{X,E_i}\xi+R(E_i,X)\xi,E_i)
:=$$$$:=2\sum_{i=1}^{m}g(\nabla^2_{X,E_i}\xi,E_i)+2trace(Z\mapsto R(Z,X)\xi):=
2\sum_{i=1}^{m}g(\nabla_{X}\nabla_{E_i}\xi-\nabla_{\nabla_X{E_i}}\xi,E_i)+2S(X,\xi)=$$$$=
2\sum_{i=1}^{m}g(\nabla_{X}\nabla_{E_i}\xi, E_i)+2S(X,\xi)=2\sum_{i=1}^{m}X(g(\nabla_{E_i}\xi, E_i))+2S(X,\xi)=2X(div(\xi))+2S(X,\xi),$$
where $R$ is the Riemann curvature and $S$ is the Ricci curvature tensor fields of the metric $g$ and the relation (\ref{e46}), for $X:=\xi$, becomes:
\begin{equation}\label{e49}
(div(\mathcal{L}_{\xi}g))(\xi)=2\xi(div(\xi))+2S(\xi,\xi).
\end{equation}

But the Bochner formula states that for any vector field $X$ \cite{pet2}:
\begin{equation}\label{e18}
(div(\mathcal{L}_{X}g))(X)=\frac{1}{2}\Delta(|X|^2)-|\nabla X|^2+S(X,X)+X(div(X))
\end{equation}
and from (\ref{e49}) and (\ref{e18}) we deduce that:
\begin{equation}\label{e52}
\Delta(|\xi|^2)-2|\nabla \xi|^2=2S(\xi,\xi)+2\xi(div(\xi)).
\end{equation}

Remark that (\ref{e46}) can be written in terms of $(1,1)$-tensor fields:
\begin{equation}
div (L_{\xi}g)=2d(div(\xi))+2i_{Q\xi}g,
\end{equation}
where $Q$ is the Ricci operator defined by $g(QX,Y):=S(X,Y)$.

\section{Gradient $\eta$-Ricci solitons}

Consider now the equation:
\begin{equation}\label{e11}
\mathcal{L}_{\xi}g+2S+2\lambda g+2\mu\eta\otimes \eta=0,
\end{equation}
where $g$ is a Riemannian metric, $S$ its Ricci curvature, $\eta$ a $1$-form and $\lambda$ and $\mu$ are real constants. The data $(g,\xi,\lambda,\mu)$ which satisfy
the equation (\ref{e11}) is said to be an \textit{$\eta$-Ricci soliton} on $M$ \cite{ch}; in particular, if $\mu=0$, $(g,\xi,\lambda)$ is a \textit{Ricci soliton} \cite{ham}. If the potential vector field $\xi$ is of gradient type, $\xi=grad(f)$, for $f$ a smooth function on $M$, then $(g,\xi,\lambda,\mu)$ is called \textit{gradient $\eta$-Ricci soliton}.

\begin{proposition}
Let $(M,g)$ be a Riemannian manifold. If (\ref{e11}) defines a gradient $\eta$-Ricci soliton on $M$
with the potential vector field $\xi:=grad(f)$ and $\eta$ is the $g$-dual $1$-form of $\xi$, then:
\begin{equation}
(\nabla_XQ)Y-(\nabla_YQ)X=-\nabla^2_{X,Y}\xi+\nabla^2_{Y,X}\xi+\mu(df\otimes \nabla\xi-\nabla\xi\otimes df)(X,Y),
\end{equation}
for any $X$, $Y\in\chi(M)$, where $Q$ stands for the Ricci operator.
\end{proposition}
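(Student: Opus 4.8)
The plan is to recast the soliton equation in operator form and then differentiate it once. Since $\xi=grad(f)$, the symmetry relation (\ref{e41}) gives $(\mathcal{L}_{\xi}g)(X,Y)=2g(\nabla_X\xi,Y)$, so (\ref{e11}) is equivalent to the pointwise identity
\begin{equation}
\nabla_X\xi=-QX-\lambda X-\mu\eta(X)\xi,
\end{equation}
for all $X\in\chi(M)$, where I use $g(QX,Y)=S(X,Y)$ together with $\mu\eta(X)\eta(Y)=\mu\,g(\eta(X)\xi,Y)$ to read off the $(1,1)$-tensor version of each term.

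Next I would compute the second covariant derivative $\nabla^2_{X,Y}\xi=\nabla_X\nabla_Y\xi-\nabla_{\nabla_XY}\xi$ by substituting the above expression for $\nabla_Y\xi$ and for $\nabla_{\nabla_XY}\xi$, and then applying $\nabla_X$. The terms $Q(\nabla_XY)$ and $\lambda\nabla_XY$ cancel between the two pieces, leaving
\begin{equation}
\nabla^2_{X,Y}\xi=-(\nabla_XQ)Y-\mu(\nabla_X\eta)(Y)\,\xi-\mu\eta(Y)\nabla_X\xi,
\end{equation}
where I have used $X(\eta(Y))-\eta(\nabla_XY)=(\nabla_X\eta)(Y)$ and, since $\eta$ is the $g$-dual of $\xi$, the identity $(\nabla_X\eta)(Y)=g(\nabla_X\xi,Y)$.

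Then I would antisymmetrize this identity in $X$ and $Y$. The crucial observation is that the coefficient of $\xi$, namely $g(\nabla_X\xi,Y)$, is symmetric in $X$ and $Y$ by (\ref{e41}), so those two terms cancel upon subtraction. What remains is
\begin{equation}
\nabla^2_{X,Y}\xi-\nabla^2_{Y,X}\xi=-(\nabla_XQ)Y+(\nabla_YQ)X-\mu\eta(Y)\nabla_X\xi+\mu\eta(X)\nabla_Y\xi,
\end{equation}
and rearranging yields the claimed formula once the last two $\mu$-terms are read as $\mu(df\otimes\nabla\xi-\nabla\xi\otimes df)(X,Y)$, using $df=\eta$ and $(\nabla\xi)(X)=\nabla_X\xi$.

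The computation is essentially mechanical once the operator form is in place; the only point requiring genuine care is the bookkeeping in the expansion of $\nabla^2_{X,Y}\xi$ and the correct matching of the antisymmetric $\mu$-terms with the tensor-product expression on the right-hand side. Everything hinges on the symmetry (\ref{e41}) coming from the closedness of $\eta$, which is exactly what makes the symmetric $g(\nabla_X\xi,Y)\,\xi$ terms drop out under antisymmetrization.
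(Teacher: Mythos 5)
Your proposal is correct and follows essentially the same route as the paper: both pass to the $(1,1)$-tensor form $\nabla\xi+Q+\lambda I+\mu\,df\otimes\xi=0$, take one covariant derivative, and antisymmetrize in $X,Y$, with the symmetric terms $g(\nabla_X\xi,Y)\xi$ cancelling by (\ref{e41}). The only cosmetic difference is that you solve for $\nabla^2_{X,Y}\xi$ while the paper solves for $(\nabla_XQ)Y$, which is the same computation.
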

\begin{proof}
As $g(QX,Y):=S(X,Y)$, follows:
\begin{equation}\label{e12}
\nabla\xi+Q+\lambda I_{\chi(M)}+\mu df\otimes \xi=0.
\end{equation}

Then:
\begin{equation}
(\nabla_XQ)Y=-(\nabla_X\nabla_Y\xi-\nabla_{\nabla_XY}\xi)-\mu\{g(Y,\nabla_X\xi)\xi+df(Y)\nabla_X\xi\}:=$$$$:=
-\nabla^2_{X,Y}\xi-\mu\{g(Y,\nabla_X\xi)\xi+df(Y)\nabla_X\xi\}
\end{equation}
and using (\ref{e41}) we get the required relation.
\end{proof}

\begin{theorem}\label{t}
If (\ref{e11}) defines a gradient $\eta$-Ricci soliton on the $m$-dimensional Riemannian manifold $(M,g)$ and
$\eta$ is the $g$-dual $1$-form of the gradient vector field $\xi:=grad(f)$, then:
\begin{equation}\label{e53}
\frac{1}{2}(\Delta-\nabla_{\xi})(|\xi|^2)=|Hess(f)|^2+\lambda |\xi|^2+\mu |\xi|^2\{|\xi|^2-2\Delta(f)\}.
\end{equation}
\end{theorem}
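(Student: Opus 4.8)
The plan is to combine the Bochner identity (\ref{e52}) with the $(1,1)$-tensorial form (\ref{e12}) of the soliton equation, and then to remove the resulting unwanted third-order term by differentiating the soliton equation itself. First I would rewrite (\ref{e52}) as
$$\frac{1}{2}\Delta(|\xi|^2)=|\nabla\xi|^2+S(\xi,\xi)+\xi(div(\xi)),$$
and note that, since $\xi=grad(f)$, one has $\nabla\xi=Hess(f)$ and hence $|\nabla\xi|^2=|Hess(f)|^2$. Contracting (\ref{e12}) with $\xi$ and using $(df\otimes\xi)(\xi)=|\xi|^2\xi$ gives $\nabla_\xi\xi=-Q\xi-\lambda\xi-\mu|\xi|^2\xi$; taking the inner product with $\xi$ and recalling $g(\nabla_\xi\xi,\xi)=\frac{1}{2}\nabla_\xi(|\xi|^2)$ then yields
$$\frac{1}{2}\nabla_\xi(|\xi|^2)=-S(\xi,\xi)-\lambda|\xi|^2-\mu|\xi|^4.$$
Subtracting this from the previous display reduces the claimed formula (\ref{e53}) to the single scalar identity $2S(\xi,\xi)+\xi(div(\xi))=-2\mu|\xi|^2\Delta(f)$.

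The heart of the proof is this last identity, and it is where I expect the real work to lie, because $\xi(div(\xi))=\xi(\Delta(f))$ is a third-order quantity in $f$ that has to be eliminated. To produce it I would take the divergence of the soliton equation (\ref{e11}). By (\ref{e46}) one has $(div(\mathcal{L}_\xi g))(X)=2X(div(\xi))+2S(X,\xi)$; the metric term is divergence-free; the contracted second Bianchi identity gives $(div\,S)(X)=\frac{1}{2}X(r)$, where $r=trace(Q)$ is the scalar curvature; and a direct Leibniz computation (using $div(\eta)=div(\xi)=\Delta(f)$) yields
$$(div(\eta\otimes\eta))(X)=\Delta(f)\,\eta(X)+g(\nabla_\xi\xi,X).$$
Collecting these and evaluating at $X:=\xi$ gives a relation linking $\xi(div(\xi))$, $S(\xi,\xi)$, $\xi(r)$ and $\xi(|\xi|^2)$.

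To close, I would eliminate the curvature derivative $\xi(r)$ by taking the trace of (\ref{e12}), which gives $div(\xi)+r+\lambda m+\mu|\xi|^2=0$, whence $\xi(r)=-\xi(div(\xi))-\mu\xi(|\xi|^2)$. Substituting this into the relation just obtained, and using $g(\nabla_\xi\xi,\xi)=\frac{1}{2}\xi(|\xi|^2)$, makes all the $\xi(|\xi|^2)$ contributions cancel and leaves exactly $2S(\xi,\xi)+\xi(div(\xi))=-2\mu|\xi|^2\Delta(f)$. Feeding this back into the reduction of the first paragraph produces
$$\frac{1}{2}(\Delta-\nabla_\xi)(|\xi|^2)=|Hess(f)|^2+\lambda|\xi|^2+\mu|\xi|^4-2\mu|\xi|^2\Delta(f),$$
which is precisely (\ref{e53}) after collecting the last two terms as $\mu|\xi|^2\{|\xi|^2-2\Delta(f)\}$.

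The only delicate points I anticipate are keeping the sign in the contracted Bianchi identity correct and carrying out the $div(\eta\otimes\eta)$ computation cleanly; once the identity $2S(\xi,\xi)+\xi(div(\xi))=-2\mu|\xi|^2\Delta(f)$ is in hand, the rest is bookkeeping with the two algebraic relations extracted from (\ref{e12}).
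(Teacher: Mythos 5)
Your proposal is correct and uses exactly the same ingredients as the paper's proof: the Bochner identity (\ref{e52}), the divergence of the soliton equation combined with the contracted Bianchi identity $d(scal)=2\,div(S)$, the differentiated trace equation (\ref{e14}), the computation of $(div(\eta\otimes\eta))(\xi)$ as in (\ref{e20}), and the contraction of (\ref{e12}) with $\xi$ giving (\ref{e27}). The only difference is organizational --- you isolate the intermediate identity $2S(\xi,\xi)+\xi(div(\xi))=-2\mu|\xi|^{2}\Delta(f)$ before assembling the final formula, whereas the paper combines the same relations in a slightly different order --- so this is essentially the paper's argument.
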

\begin{proof}
First remark that if $\xi=\sum _{i=1}^m\xi^iE_i$, for $\{E_i\}_{1\leq i\leq m}$ a local orthonormal frame field with $\nabla_{E_i}E_j=0$ in a point, then:
\begin{equation}
trace(\eta\otimes \eta)=\sum _{i=1}^m[df(E_i)]^2=\sum_{1\leq i,j,k\leq m}\xi^j\xi^k g(E_i,E_j)g(E_i,E_k)=\sum_{i=1}^m(\xi^i)^2=$$$$=\sum_{1\leq i,j\leq m}\xi^i\xi^j g(E_i,E_j)=|\xi|^2.
\end{equation}

Taking the trace of the equation (\ref{e11}), we obtain:
\begin{equation}\label{e13}
div(\xi)+scal+m\lambda +\mu |\xi|^2=0
\end{equation}
and differentiating it:
\begin{equation}\label{e14}
d(div(\xi))+d(scal)+\mu d(|\xi|^2)=0.
\end{equation}

Then taking the divergence of the same equation, we get:
\begin{equation}\label{e15}
div(\mathcal{L}_{\xi}g)+2div(S)+2\mu \cdot div(df\otimes df)=0.
\end{equation}

Substracting the relations (\ref{e15}) and (\ref{e14}) computed in $\xi$, considering (\ref{e49}),
(\ref{e52}) and using the fact that the scalar and the Ricci curvatures satisfy \cite{pet2}:
\begin{equation}\label{e19}
d(scal)=2div(S),
\end{equation}
we obtain:
\begin{equation}\label{e17}
\frac{1}{2}\Delta(|\xi|^2)-|\nabla \xi|^2+S(\xi,\xi)+\mu\{2(div(df\otimes df))(\xi)-\xi(|\xi|^2)\}=0.
\end{equation}

As
\begin{equation}\label{e20}
(div(df\otimes df))(\xi):=\sum_{i=1}^{m}\{E_i(df(E_i)df(\xi))-df(E_i)df(\nabla_{E_i}\xi)\}=$$$$=
\sum_{i=1}^{m}\{g(E_i,\xi)g(\nabla_{E_i}\xi,\xi)+g(\xi,\xi)g(E_i,\nabla_{E_i}\xi)\}
=g(\nabla_{\xi}\xi,\xi)+|\xi|^2\sum_{i=1}^{m}g(\nabla_{E_i}\xi,E_i)
:=$$$$:=\frac{1}{2}\xi(|\xi|^2)+|\xi|^2div(\xi),
\end{equation}
the equation (\ref{e17}) becomes:
\begin{equation}\label{e21}
\frac{1}{2}\Delta(|\xi|^2)-|\nabla \xi|^2+S(\xi,\xi)+2\mu |\xi|^2div(\xi)=0.
\end{equation}

From the $\eta$-soliton equation (\ref{e11}), we get:
\begin{equation}\label{e27}
S(\xi,\xi)=-\frac{1}{2}\xi(|\xi|^2)-\lambda |\xi|^2-\mu|\xi|^4,
\end{equation}
and the equation (\ref{e21}) becomes:
\begin{equation}\label{e28}
\frac{1}{2}\Delta(|\xi|^2)=|\nabla \xi|^2+\frac{1}{2}\xi(|\xi|^2)+\lambda |\xi|^2+
\mu |\xi|^4-2\mu |\xi|^2div(\xi).
\end{equation}

As $\xi:=grad(f)$ follows $Hess(f)=\nabla(df)$ and $|\nabla \xi|^2=|Hess(f)|^2$.
\end{proof}

\begin{remark}
For $\mu=0$ in Theorem \ref{t}, we obtain the relation for the particular case of gradient Ricci soliton \cite{pet2}.
\end{remark}

\begin{remark}
i) Assume that $\mu\neq 0$. Denoting by $\Delta_{\xi}:=\Delta-\nabla_{\xi}$, the equation (\ref{e53}) can be written:
$$\frac{1}{2}\Delta_{\xi}(|\xi|^2)=|Hess(f)|^2+|\xi|^2\{\lambda +\mu [|\xi|^2-2\Delta(f)]\},$$
where $\xi:=grad(f)$. If $\lambda \geq \mu [2\Delta(f)-|\xi|^2]$, then $\Delta_{\xi}(|\xi|^2)\geq 0$ and from the maximum principle follows that $|\xi|^2$ is constant in a neighborhood of any local maximum. If $|\xi|$ achieve its maximum, then $M$ is quasi-Einstein. Indeed, since $Hess(f)=0$, from (\ref{e11}) we have $S=-\lambda g-\mu df\otimes df$. Moreover, in this case, $|\xi|^2\{\lambda +\mu [|\xi|^2-2\Delta(f)]=0$, which implies either $\xi=0$, so $M$ is Einstein, or $|\xi|^2=2\Delta(f)-\frac{\lambda}{\mu}\geq 0$. Since $\Delta(f)=-scal-m\lambda-\mu |\xi|^2$ we get $\mu(2\mu+1)|\xi|^2=-(2\mu \cdot scal+2m\lambda\mu+\lambda)$. If $\mu=-\frac{1}{2}$, the scalar curvature equals to $\lambda(1-m)$ and if $\mu\neq-\frac{1}{2}$, it is either locally upper (or lower) bounded by $-\frac{\lambda(1+2m\mu)}{2\mu}$, for $\mu<-\frac{1}{2}$ ($\mu>-\frac{1}{2}$, respectively). On the other hand, if the potential vector field is of constant length, then $2\mu \Delta(f)\geq \lambda+\mu|\xi|^2$ equivalent to $\mu(2\mu+1)|\xi|^2+(2\mu \cdot scal+2m\lambda\mu+\lambda)\leq 0$ with equality for $\Delta(f)=\frac{\lambda}{2\mu}+\frac{|\xi|^2}{2}\geq\frac{\lambda}{2\mu}$ and $Hess(f)=0$ which yields the quasi-Einstein case.

ii) For $\mu=0$, we get the Ricci soliton case \cite{pet2}.
\end{remark}

\begin{proposition}\label{d}
Let $(M,g)$ be an $m$-dimensional Riemannian manifold and
$\eta$ be the $g$-dual $1$-form of the gradient vector field $\xi:=grad(f)$. If $\xi$ satisfies $\nabla \xi=I_{\chi(M)}-\eta\otimes\xi$, where $\nabla$ is the Levi-Civita connection associated to $g$, then:
\begin{enumerate}
  \item $Hess(f)=g-\eta\otimes \eta$;
  \item $R(X,Y)\xi=\eta(X)Y-\eta(Y)X$, for any $X$, $Y\in \chi(M)$;
  \item $S(\xi,\xi)=(1-m)|\xi|^2$.
\end{enumerate}
\end{proposition}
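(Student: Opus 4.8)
The plan is to read off each of the three assertions directly from the pointwise form of the hypothesis, namely $\nabla_X\xi = X - \eta(X)\xi$, which is simply $\nabla\xi = I_{\chi(M)} - \eta\otimes\xi$ evaluated on $X\in\chi(M)$.

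For the first assertion I would recall that, since $\xi = grad(f)$, one has $Hess(f)(X,Y) = g(\nabla_X\xi, Y)$. Substituting the hypothesis gives $Hess(f)(X,Y) = g(X,Y) - \eta(X)g(\xi,Y) = g(X,Y) - \eta(X)\eta(Y)$, which is exactly $Hess(f) = g - \eta\otimes\eta$. This step is immediate and presents no difficulty.

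For the second assertion I would expand $R(X,Y)\xi = \nabla_X\nabla_Y\xi - \nabla_Y\nabla_X\xi - \nabla_{[X,Y]}\xi$ by repeatedly inserting $\nabla_\bullet\xi = \bullet - \eta(\bullet)\xi$ and then collect terms by type. The purely connection contributions combine into $\nabla_X Y - \nabla_Y X - [X,Y]$, which vanishes because $\nabla$ is torsion-free, while the contributions proportional to $X$ and $Y$ assemble into the desired $\eta(X)Y - \eta(Y)X$. The main obstacle, and the only place requiring care, is the coefficient of $\xi$: the terms $\eta(X)\eta(Y) - \eta(Y)\eta(X)$ cancel by symmetry, and what remains is $-\bigl(X(\eta(Y)) - Y(\eta(X)) - \eta([X,Y])\bigr) = -d\eta(X,Y)$. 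Since $\xi$ is a gradient, $\eta$ is closed (as noted at the beginning of Section 2), so $d\eta = 0$ and the whole $\xi$-component disappears, leaving $R(X,Y)\xi = \eta(X)Y - \eta(Y)X$.

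For the third assertion I would contract the second one using the Ricci convention adopted earlier in the paper, $S(\xi,\xi) = trace(Z\mapsto R(Z,\xi)\xi) = \sum_{i=1}^m g(R(E_i,\xi)\xi, E_i)$, evaluated in the local orthonormal frame $\{E_i\}$ fixed in Section 2. From the second assertion, $R(E_i,\xi)\xi = \eta(E_i)\xi - |\xi|^2 E_i$, so $g(R(E_i,\xi)\xi, E_i) = \eta(E_i)^2 - |\xi|^2$. Summing over $i$ and using $\sum_{i=1}^m \eta(E_i)^2 = |\xi|^2$ yields $S(\xi,\xi) = |\xi|^2 - m|\xi|^2 = (1-m)|\xi|^2$, completing the proof.
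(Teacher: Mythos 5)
Your proposal is correct and follows essentially the same route as the paper: substitute $\nabla_X\xi=X-\eta(X)\xi$ into the definitions of the Hessian, the curvature tensor, and the Ricci trace (the only cosmetic difference is that you use $Hess(f)(X,Y)=g(\nabla_X\xi,Y)$ directly where the paper expands $2\,Hess(f)=\mathcal{L}_\xi g$). Your explicit remark that the $\xi$-component in item 2 vanishes because $\eta=df$ is closed is a detail the paper leaves implicit under ``direct computation,'' and it is handled correctly.
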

\begin{proof}
\begin{enumerate}
  \item Express the Lie derivative along $\xi$ as follows:
$$2(Hess(f))(X,Y)=(\mathcal{L}_{\xi}g)(X,Y):=\xi(g(X,Y))-g([\xi,X],Y)-g(X,[\xi,Y])=$$$$=\xi(g(X,Y))-g(\nabla_{\xi}X,Y)+g(\nabla_X\xi,Y)-
g(X,\nabla_{\xi}Y)+g(X,\nabla_Y\xi)=$$$$=g(\nabla_X\xi,Y)+g(X,\nabla_Y\xi)=2[g(X,Y)-\eta(X)\eta(Y)].$$
  \item Replacing now the expression of $\nabla \xi$ in $R(X,Y)\xi:=\nabla_X\nabla_Y\xi-\nabla_Y\nabla_X\xi-\nabla_{[X,Y]}\xi$, from a direct computation we get $R(X,Y)\xi=\eta(X)Y-\eta(Y)X$.
  \item $$S(\xi,\xi):=\sum_{i=1}^{m}g(R(E_i,\xi)\xi,E_i)=\sum_{i=1}^{m}\{[\eta(E_i)]^2-\eta(\xi)\}=
|\xi|^2-m|\xi|^2.$$
\end{enumerate}
\end{proof}

The condition satisfied by the potential vector field $\xi$, namely, $\nabla \xi=I_{\chi(M)}-\eta\otimes\xi$, naturally arises if $(M,\varphi, \xi,\eta,g)$ is for example, Kenmotsu manifold \cite{ke}. In this case, $M$ is a quasi-Einstein manifold.

\begin{example}\label{exa}
Let $M=\{(x,y,z)\in\mathbb{R}^3, z> 0\}$, where $(x,y,z)$ are the standard coordinates in $\mathbb{R}^3$. Set
$$\varphi:=-\frac{\partial}{\partial y}\otimes dx+ \frac{\partial}{\partial x}\otimes dy, \ \ \xi:=-z\frac{\partial}{\partial z}, \ \ \eta:=-\frac{1}{z}dz,$$
$$g:=\frac{1}{z^2}(dx\otimes dx+dy\otimes dy+dz\otimes dz).$$
Then $(\varphi , \xi , \eta , g)$ is a Kenmotsu structure on $M$.

Consider the linearly independent system of vector fields:
$$E_1:=z\frac{\partial}{\partial x}, \ \ E_2:=z\frac{\partial}{\partial y}, \ \ E_3:=-z\frac{\partial}{\partial z}.$$
Follows
$$\varphi E_1=-E_2, \ \ \varphi E_2=E_1, \ \ \varphi E_3=0,$$
$$\eta(E_1)=0, \ \ \eta(E_2)=0, \ \ \eta(E_3)=1,$$
$$[E_1,E_2]=0, \ \ [E_2,E_3]=E_2, \ \ [E_3,E_1]=-E_1$$
and the Levi-Civita connection $\nabla$ is deduced from Koszul's formula
$$2g(\nabla_XY,Z)=X(g(Y,Z))+Y(g(Z,X))-Z(g(X,Y))-$$$$-g(X,[Y,Z])+g(Y,[Z,X])+g(Z,[X,Y]),$$
precisely
$$\nabla_{E_1}E_1=-E_3, \ \ \nabla_{E_1}E_2=0, \ \ \nabla_{E_1}E_3=E_1,$$
$$\nabla_{E_2}E_1=0, \ \ \nabla_{E_2}E_2=-E_3, \ \ \nabla_{E_2}E_3=E_2,$$
$$\nabla_{E_3}E_1=0, \ \ \nabla_{E_3}E_2=0, \ \ \nabla_{E_3}E_3=0.$$
Then the Riemann and the Ricci curvature tensor fields are given by:
$$R(E_1,E_2)E_2=-E_1, \ \ R(E_1,E_3)E_3=-E_1, \ \ R(E_2,E_1)E_1=-E_2,$$ $$R(E_2,E_3)E_3=-E_2, \ \ R(E_3,E_1)E_1=-E_3, \ \ R(E_3,E_2)E_2=-E_3,$$
$$S(E_1,E_1)=S(E_2,E_2)=S(E_3,E_3)=-2.$$
From (\ref{e11}) computed in $(E_i,E_i)$:
$$2[g(E_i,E_i)-\eta(E_i)\eta(E_i)]+2S(E_i,E_i)+2\lambda g(E_i,E_i)+2\mu\eta(E_i)\eta(E_i)=0,$$
for all $i\in\{1,2,3\}$, we have:
$$2(1-\delta_{i3})-4+2\lambda+2\mu\delta_{i3}=0 \ \ \ \Longleftrightarrow \ \ \ \lambda-1+(\mu-1)\delta_{i3}=0,$$
for all $i\in\{1,2,3\}$. Therefore, $\lambda=\mu=1$ define an $\eta$-Ricci soliton on $(M, \varphi , \xi , \eta , g)$. Moreover, it is a gradient $\eta$-Ricci soliton, as the potential vector field $\xi$ is of gradient type, $\xi=grad(f)$, where $f(x,y,z):=-\ln z$.
\end{example}

\bigskip

Assume now that (\ref{e11}) defines a gradient $\eta$-Ricci soliton on $(M,g)$ with $\mu\neq 0$. Under the hypotheses of the Proposition \ref{d}, the equation (\ref{e28}) simplifies a lot. Compute:
\begin{equation}\label{e39}
|\nabla \xi|^2:=\sum_{i=1}^{m}g(\nabla_{E_i}\xi,\nabla_{E_i}\xi)=
\sum_{i=1}^{m}\{1+(|\xi|^2-2)[\eta(E_i)]^2\}=m+|\xi|^2(|\xi|^2-2),
\end{equation}
for $\{E_i\}_{1\leq i\leq m}$ a local orthonormal frame field with $\nabla_{E_i}E_j=0$ in a point,
\begin{equation}\label{e54}
\xi(|\xi|^2)=\xi(g(\xi,\xi))=2g(\nabla_{\xi}\xi,\xi)=2(|\xi|^2-|\xi|^4),
\end{equation}
\begin{equation}\label{e55}
\xi(|\xi|^4)=2|\xi|^2\xi(|\xi|^2)=4(|\xi|^4-|\xi|^6).
\end{equation}

From the equation (\ref{e11}) we obtain:
\begin{equation}\label{777}
S(\xi,\xi)=-(\lambda+1)|\xi|^2-(\mu-1)|\xi|^4.
\end{equation}

Using Proposition \ref{d} and the relation (\ref{777}), we get:
\begin{equation}
|\xi|^2=(m-1-\lambda)|\xi|^2-(\mu-1)|\xi|^4,
\end{equation}
so $|\xi|^2(\mu-1)=m-2-\lambda$ i.e. $\xi$ is of constant length. Using (\ref{e54}) we get $|\xi|=1$. It follows $\lambda+\mu=m-1$ and we deduce:

\begin{theorem}
Under the hypotheses of the Proposition \ref{d}, if (\ref{e11}) defines a gradient $\eta$-Ricci soliton on $(M,g)$ with $\mu\neq 0$, then the Laplacian equation (\ref{e28}) becomes:
\begin{equation}\label{e16}
\Delta(f)=\frac{m-1}{\mu}.
\end{equation}
\end{theorem}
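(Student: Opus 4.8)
The plan is to substitute the structural consequences of Proposition \ref{d} directly into the Laplacian equation (\ref{e28}) and solve the resulting algebraic relation for $\Delta(f)$. The preparatory work needed is exactly what the discussion preceding the statement already carries out: from $\nabla\xi = I_{\chi(M)} - \eta\otimes\xi$ together with Proposition \ref{d}(3) and (\ref{777}) one obtains $|\xi|^2(\mu-1) = m-2-\lambda$, so that $|\xi|$ is of constant length; relation (\ref{e54}) then forces $|\xi|^2 - |\xi|^4 = 0$, hence $|\xi| = 1$, and back-substitution yields the scalar relation $\lambda + \mu = m-1$. These three facts are all I intend to feed into (\ref{e28}).

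First I would record what the constancy of $|\xi|$ removes from (\ref{e28}). Since $|\xi|^2 \equiv 1$, both $\Delta(|\xi|^2)$ and $\xi(|\xi|^2)$ vanish, so the left-hand side and the $\frac{1}{2}\xi(|\xi|^2)$ term disappear. Next I would evaluate the surviving terms at $|\xi|^2 = 1$: formula (\ref{e39}) gives $|\nabla\xi|^2 = m + |\xi|^2(|\xi|^2 - 2) = m-1$, while $\lambda|\xi|^2 = \lambda$ and $\mu|\xi|^4 = \mu$. Finally, (\ref{e42}) identifies $div(\xi)$ with $\Delta(f)$, so the last term of (\ref{e28}) becomes $-2\mu\,\Delta(f)$.

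Assembling these, equation (\ref{e28}) collapses to
$$0 = (m-1) + \lambda + \mu - 2\mu\,\Delta(f).$$
Inserting $\lambda + \mu = m-1$ turns the constant part into $2(m-1)$, so that $2\mu\,\Delta(f) = 2(m-1)$, and dividing by $2\mu$ gives $\Delta(f) = \frac{m-1}{\mu}$, which is (\ref{e16}).

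There is no genuine analytic obstacle here: once Proposition \ref{d} and the relations (\ref{e39}) and (\ref{e54}) are in hand, the theorem reduces to bookkeeping. The only points demanding attention are the chain of reductions pinning down $|\xi| = 1$ and $\lambda + \mu = m-1$ before the substitution, and the standing hypothesis $\mu \neq 0$ (together with $\mu \neq 1$, used implicitly when deducing constancy of $|\xi|$), which guarantees that every division performed along the way is legitimate.
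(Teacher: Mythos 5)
Your proposal is correct and follows essentially the same route as the paper: the preparatory relations (\ref{e39}), (\ref{e54}), (\ref{777}) and Proposition \ref{d}(3) are combined to get $|\xi|=1$ and $\lambda+\mu=m-1$, after which substitution into (\ref{e28}) gives $0=(m-1)+\lambda+\mu-2\mu\Delta(f)$ and hence (\ref{e16}). Your explicit remark that $\mu\neq 1$ (and, one should add, $\xi\not\equiv 0$) is tacitly used to conclude that $|\xi|$ is constant is a point the paper itself glosses over.
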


Therefore, the existence of a gradient $\eta$-Ricci soliton defined by (\ref{e11}) with the potential vector field $\xi:=grad(f)$, yields the Laplacian equation (\ref{e16}), and the soliton is completely determined by $f$.

\section{Warped product $\eta$-Ricci solitons}

Consider $(B,g_B)$ and $(F,g_F)$ two Riemannian manifolds of dimensions $n$ and $m$, respectively. Denote by $\pi$ and $\sigma$ the projection maps from the product manifold $B\times F$ to $B$ and $F$ and by $\widetilde{\varphi}:=\varphi \circ \pi$ the lift to $B\times F$ of a smooth function $\varphi$ on $B$.
In this context, we shall call $B$ \textit{the base} and $F$ \textit{the fiber} of $B\times F$, the unique element $\widetilde{X}$ of $\chi(B\times F)$ that is $\pi$-related to $X\in \chi(B)$ and to the zero vector field on $F$, the \textit{horizontal lift of $X$} and the unique element $\widetilde{V}$ of $\chi(B\times F)$ that is $\sigma$-related to $V\in \chi(F)$ and to the zero vector field on $B$, the \textit{vertical lift of $V$}.
Also denote by $\mathcal{L}(B)$ the set of all horizontal lifts of vector fields on $B$, by $\mathcal{L}(F)$ the set of all vertical lifts of vector fields on $F$, by $\mathcal{H}$ the orthogonal projection of $T_{(p,q)}(B\times F)$ onto its horizontal subspace $T_{(p,q)}(B\times \{q\})$ and by
$\mathcal{V}$ the orthogonal projection of $T_{(p,q)}(B\times F)$ onto its vertical subspace $T_{(p,q)}(\{p\}\times F)$.

Let $\varphi>0$ be a smooth function on $B$ and
\begin{equation}\label{e7}
g:=\pi^* g_B+(\varphi\circ \pi)^2\sigma^*g_F
\end{equation}
be a Riemannian metric on $B\times F$.
\begin{definition}\cite{bi}
The product manifold of $B$ and $F$ together with the Riemannian metric $g$ defined by (\ref{e7}) is called the warped product of $B$ and $F$ by the warping function $\varphi$ (and is denoted by $(M:=B\times_{\varphi} F,g)$).
\end{definition}

Remark that if $\varphi$ is constant equal to 1, the warped product becomes the usual product of the Riemannian manifolds.

\bigskip

For simplification, in the rest of the paper we shall simply denote by $X$ the horizontal lift of $X\in \chi(B)$ and by $V$ the vertical lift of $V\in \chi(F)$.

\bigskip

Due to a result of J. Case, Y.-J. Shu and G. Wei \cite{case}, we know that for a gradient $\eta$-Ricci soliton $(g,\xi:=grad(f),\lambda,\mu)$ with $\mu\in (-\infty,0)$ and $\eta=df$ the $g$-dual of $\xi$, on a connected $n$-dimensional Riemannian manifold $(M,g)$, the function
\begin{equation}
e^{2\mu f}[\Delta(f)-|\xi|^2-\frac{\lambda}{\mu}]
\end{equation}
is constant.

Choosing properly an Einstein manifold, a smooth function and considering the warped product manifold, we can characterize the gradient $\eta$-Ricci soliton on the base manifold as follows \cite{case}. Let $(B,g_B)$ be an $n$-dimensional connected Riemannian manifold, $\lambda$ and $\mu$ real constants such that $-\frac{1}{\mu}$ is a natural number, $f$ a smooth function on $B$, $k:=\mu e^{2\mu f}[\Delta(f)-|\xi|^2-\frac{\lambda}{\mu}]$ and $(F,g_F)$ an $m$-dimensional Riemannian manifold with $m=-\frac{1}{\mu}$ and $S_F=kg_F$. Then $(g,\xi:=grad(f),\lambda,\mu)$ is a gradient $\eta$-Ricci soliton on $(B,g_B)$ with $\eta=df$ the $g$-dual of $\xi$, if and only if the warped product manifold $(M:=B\times_{\varphi}F,g)$ with the warping function $\varphi=e^{-\frac{f}{m}}$ is Einstein manifold with $S=\lambda g$.

\bigskip

Let
$S$, $S_B$, $S_F$ the Ricci tensors on $M$, $B$ and $F$ and $\widetilde{S_B}$, $\widetilde{S_F}$ the lift on $M$ of $S_B$ and $S_F$, which satisfy the following properties:

\begin{lemma}\cite{bi}\label{l}
If $(M:=B\times_{\varphi} F,g)$ is the warped product of $B$ and $F$ by the warping function $\varphi$ and $m>1$, then for any $X$, $Y\in \mathcal{L}(B)$ and any $V$, $W\in \mathcal{L}(F)$, we have:
\begin{enumerate}
  \item $S(X,Y)=\widetilde{S_B}(X,Y)-\frac{m}{\widetilde{\varphi}}H^{\varphi}(X,Y)$, where $H^{\varphi}$ is the lift on $M$ of $Hess(\varphi)$;
  \item $S(X,V)=0$;
  \item $S(V,W)=\widetilde{S_F}(V,W)-
      \pi^*[\frac{\Delta(\varphi)}{\varphi}+(m-1)\frac{|grad(\varphi)|^2}{\varphi^2}]|_Fg(V,W)$.
\end{enumerate}
\end{lemma}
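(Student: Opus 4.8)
The plan is to derive all three identities from the Levi-Civita connection of the warped metric $g$ and then contract the resulting curvature tensor against a frame adapted to the product structure. First I would record the connection formulas for $(M:=B\times_{\varphi}F,g)$. Applying the Koszul formula to the metric (\ref{e7}), together with the facts that $[\mathcal{L}(B),\mathcal{L}(F)]=0$ and that lifts of brackets are brackets of lifts, yields for $X,Y\in\mathcal{L}(B)$ and $V,W\in\mathcal{L}(F)$: $\nabla_X Y$ is the horizontal lift of $\nabla^B_X Y$; $\nabla_X V=\nabla_V X=\frac{X(\varphi)}{\widetilde{\varphi}}V$; and $\nabla_V W=\nabla^F_V W-\frac{g(V,W)}{\widetilde{\varphi}}\,grad(\varphi)$, where the last gradient is horizontal. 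These formulas are the backbone of the whole computation.

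Then I would compute the Riemann tensor $R$ in the three relevant cases. Substituting the connection formulas into $R(A,B)C=\nabla_A\nabla_B C-\nabla_B\nabla_A C-\nabla_{[A,B]}C$ produces three families of curvature terms: on purely horizontal arguments $R$ reduces to the lift of the base curvature; the mixed components such as $R(X,V)Y$ and $R(X,V)W$ are expressed through $Hess(\varphi)$ and through $\nabla\,grad(\varphi)$; and on purely vertical arguments $R(V,W)U$ equals the fiber curvature corrected by a multiple of $\frac{|grad(\varphi)|^2}{\widetilde{\varphi}^2}$. The horizontal--vertical--horizontal mixing here is where the Hessian of the warping function first enters.

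Next I would contract $S(A,B)=\sum_k g(R(E_k,A)B,E_k)$ against the adapted $g$-orthonormal frame $\{X_1,\dots,X_n,V_1,\dots,V_m\}$, where $\{X_i\}$ are horizontal lifts of a $g_B$-orthonormal frame and $V_a:=\frac{1}{\widetilde{\varphi}}U_a$ with $\{U_a\}$ a $g_F$-orthonormal frame, the scaling by $\frac{1}{\widetilde{\varphi}}$ making the $V_a$ unit in $g$ by (\ref{e7}). Case by case: for $S(X,Y)$ the horizontal part of the sum returns $\widetilde{S_B}(X,Y)$, while the $m$ vertical directions each contribute $-\frac{1}{\widetilde{\varphi}}H^{\varphi}(X,Y)$, producing the factor $m$ of (1); the mixed identity (2) follows because every curvature component entering $S(X,V)$ is orthogonal to the frame vectors; and for $S(V,W)$ the horizontal sum assembles the trace $\frac{\Delta(\varphi)}{\widetilde{\varphi}}$, while the vertical sum yields $\widetilde{S_F}(V,W)$ together with $(m-1)\frac{|grad(\varphi)|^2}{\widetilde{\varphi}^2}$, the coefficient $(m-1)$ appearing because the direction along $V,W$ itself drops out of that contraction.

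The main obstacle is the bookkeeping forced by the warping: because the fiber metric is scaled by $\varphi^2$, the normalization $V_a=\frac{1}{\widetilde{\varphi}}U_a$ must be carried consistently through every contraction, and it is exactly this scaling, combined with the counting of the $m$ vertical directions, that generates the correct coefficients $m$, $\frac{\Delta(\varphi)}{\widetilde{\varphi}}$ and $(m-1)\frac{|grad(\varphi)|^2}{\widetilde{\varphi}^2}$. The hypothesis $m>1$ is what keeps the vertical contraction in (3) from degenerating; for $m=1$ the fiber Ricci $\widetilde{S_F}$ and the $(m-1)$ term both vanish and the statement has to be read separately.
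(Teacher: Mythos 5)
The paper does not prove this lemma at all: it is quoted verbatim from Bishop--O'Neill \cite{bi} (see also O'Neill's \emph{Semi-Riemannian Geometry}, Ch.~7), so there is no internal proof to compare against. Your outline is precisely the standard derivation from that source --- the Koszul-formula connection identities $\nabla_XV=\frac{X(\varphi)}{\widetilde{\varphi}}V$ and $\nabla_VW=\nabla^F_VW-\frac{g(V,W)}{\widetilde{\varphi}}\,grad(\varphi)$, the resulting block description of $R$, and the contraction in the adapted frame $\{X_i,\frac{1}{\widetilde{\varphi}}U_a\}$ --- and the bookkeeping you describe does produce the coefficients $m$, $\frac{\Delta(\varphi)}{\varphi}$ and $(m-1)\frac{|grad(\varphi)|^2}{\varphi^2}$ correctly, with (2) following because each curvature term $R(E_k,X)V$ is orthogonal to the contracting frame vector $E_k$. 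The argument is correct; the only (harmless) quibble is your closing remark about $m>1$: the formulas in (1)--(3) remain literally valid for $m=1$ with $\widetilde{S_F}=0$ and the $(m-1)$ term dropping out, so the hypothesis is a matter of convention in the cited statement rather than a point where the contraction degenerates.
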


Notice that the lift on $M$ of the gradient and the Hessian satisfy:
\begin{equation}
grad(\widetilde{f})=\widetilde{grad(f)},
\end{equation}
\begin{equation}
(Hess(\widetilde{f}))(X,Y)=\widetilde{(Hess(f))(X,Y)}, \ \ \textit{for any } \ X, Y \in \mathcal{L}(B).
\end{equation}
for any smooth function $f$ on $B$.

\bigskip

We shall construct a gradient $\eta$-Ricci soliton on a warped product manifold following \cite{fe}.

Let $(B,g_B)$ be a Riemannian manifold, $\varphi>0$ a smooth function on $B$ and $f$ a smooth function on $B$ such that:
\begin{equation}\label{e36}
S_B+Hess(f)-\frac{m}{\varphi}Hess(\varphi)+\lambda g_B+\mu df\otimes df=0,
\end{equation}
where $\lambda$, $\mu$
and $m>1$ are real constants.

Take $(F,g_F)$ an $m$-dimensional manifold with $S_F=k g_F$, for $k:=\pi^*[-\lambda \varphi^2+\varphi\Delta(\varphi)+(m-1)|grad(\varphi)|^2-\varphi(grad(f))(\varphi)]|_F$, where $\pi$ and $\sigma$ be the projection maps from the product manifold $B\times F$ to $B$ and $F$, respectively, and $g:=\pi^* g_B+(\varphi\circ \pi)^2\sigma^*g_F$ a Riemannian metric on $B\times F$. Then, for $\xi:=grad(f \circ \pi)$, if consider $\mu=0$ in (\ref{e36}), $(g, \xi, \lambda)$ is a gradient Ricci soliton on $B\times_{\varphi} F$ called the \textit{warped product Ricci soliton} \cite{fe}.

With the above notations, we prove that:
\begin{theorem}
Let $(B,g_B)$ be a Riemannian manifold, $\varphi>0$, $f$ two smooth functions on $B$, let $m>1$, $\lambda$, $\mu$ be real constants satisfying (\ref{e36}) and $(F,g_F)$ an $m$-dimensional Riemannian manifold. Then $(g,\xi,\lambda,\mu)$ is a gradient $\eta$-Ricci soliton on the warped product manifold $(B\times_{\varphi} F,g)$, where $\xi=grad(\widetilde{f})$ and the $1$-form $\eta$ is the $g$-dual of $\xi$,
if and only if:
\begin{equation}\label{e32}
S_B=-Hess(f)+\frac{m}{\varphi}Hess(\varphi)-\lambda g_B-\mu df\otimes df
\end{equation}
and
\begin{equation}\label{e44}
S_F=kg_F,
\end{equation}
where $k:=\pi^*[-\lambda \varphi^2+\varphi\Delta(\varphi)+(m-1)|grad(\varphi)|^2-\varphi(grad(f))(\varphi)]|_F$.
\end{theorem}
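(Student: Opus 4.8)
The plan is to use the gradient structure of $\xi=grad(\widetilde f)$ to rewrite the soliton equation (\ref{e11}) as the single symmetric $(0,2)$-tensor equation
\begin{equation}
Hess(\widetilde f)+S+\lambda g+\mu\, d\widetilde f\otimes d\widetilde f=0
\end{equation}
on $M=B\times_{\varphi}F$, which is legitimate because $\mathcal{L}_{\xi}g=2\,Hess(\widetilde f)$ and $\eta=d\widetilde f$. Since every tangent space of $M$ splits as the direct sum of a horizontal and a vertical subspace spanned, respectively, by elements of $\mathcal{L}(B)$ and $\mathcal{L}(F)$, and both sides are tensorial, this equation holds if and only if it holds on each of the three types of pairs $(X,Y)$, $(X,V)$ and $(V,W)$ with $X,Y\in\mathcal{L}(B)$ and $V,W\in\mathcal{L}(F)$. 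I would therefore evaluate the four summands on each type of pair, invoking Lemma \ref{l} for $S$ and the identity $Hess(\widetilde f)(P,Q)=g(\nabla_P\xi,Q)$ for the Hessian, together with the warped-product connection rule $\nabla_V X=\frac{X\varphi}{\varphi}V$ for $X\in\mathcal{L}(B)$, $V\in\mathcal{L}(F)$.

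The two easy observations are that $grad(\widetilde f)=\widetilde{grad(f)}$ is horizontal and that $\widetilde f$ is constant along the fibers, so $d\widetilde f(V)=0$ for $V\in\mathcal{L}(F)$; hence the term $\mu\, d\widetilde f\otimes d\widetilde f$ survives only on horizontal pairs, where it is $\mu\, df\otimes df$. On a horizontal pair $(X,Y)$ I would use the stated identity $Hess(\widetilde f)(X,Y)=\widetilde{Hess(f)(X,Y)}$ together with Lemma \ref{l}(1), and the four terms collapse exactly into the base equation (\ref{e32}) (equivalently (\ref{e36})). On a mixed pair $(X,V)$ every term vanishes: $S(X,V)=0$ by Lemma \ref{l}(2), $g(X,V)=0$, the $\mu$-term dies as above, and $Hess(\widetilde f)(X,V)=g(\nabla_V\xi,X)=\frac{\xi(\varphi)}{\varphi}g(V,X)=0$. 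Thus the mixed component imposes no condition.

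The substantive case, which I expect to be the main bookkeeping obstacle, is the vertical pair $(V,W)$. Here $Hess(\widetilde f)(V,W)=g(\nabla_V\xi,W)=\frac{\xi(\varphi)}{\varphi}g(V,W)=\frac{(grad(f))(\varphi)}{\varphi}g(V,W)$, where one identifies $\xi(\varphi)$ with $(grad(f))(\varphi)$ because $\xi$ is the horizontal lift of $grad(f)$. Combining this with Lemma \ref{l}(3) and the $\lambda g$ term, the $\mu$-term being absent, the vertical component reads $\widetilde{S_F}(V,W)=\{-\frac{(grad(f))(\varphi)}{\varphi}+\frac{\Delta(\varphi)}{\varphi}+(m-1)\frac{|grad(\varphi)|^2}{\varphi^2}-\lambda\}g(V,W)$; substituting $g(V,W)=\varphi^2 g_F(V,W)$ and simplifying yields $S_F=k\,g_F$ with $k=-\lambda\varphi^2+\varphi\Delta(\varphi)+(m-1)|grad(\varphi)|^2-\varphi(grad(f))(\varphi)$, which is precisely (\ref{e44}). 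The delicate points are keeping track of the powers of $\varphi$ when passing from $g(V,W)$ to $\varphi^2 g_F(V,W)$ and correctly reading off $\xi(\varphi)=(grad(f))(\varphi)$; both are routine once the rule $\nabla_V X=\frac{X\varphi}{\varphi}V$ is used. As the three pair types are exhaustive, the soliton equation on $M$ holds if and only if (\ref{e32}) and (\ref{e44}) hold simultaneously, establishing the equivalence.
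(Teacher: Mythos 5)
Your proposal is correct and follows essentially the same route as the paper: rewrite the soliton equation as $Hess(\widetilde f)+S+\lambda g+\mu\,\eta\otimes\eta=0$, split into horizontal, mixed and vertical pairs, apply Lemma \ref{l}, and compute $Hess(\widetilde f)(V,W)=\frac{(grad(f))(\varphi)}{\varphi}\,\widetilde{\varphi}^2 g_F(V,W)$ to extract the fiber condition $S_F=kg_F$. Your explicit verification that every term vanishes on mixed pairs is a detail the paper merely asserts, but the argument is the same.
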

\begin{proof}
The gradient $\eta$-Ricci soliton $(g,\xi,\lambda,\mu)$ on $(B\times_{\varphi} F,g)$ is given by:
\begin{equation}\label{e31}
Hess(\widetilde{f})+S+\lambda g+\mu\eta\otimes \eta=0.
\end{equation}

Then for any $X$, $Y\in \mathcal{L}(B)$ and for any $V$, $W\in \mathcal{L}(F)$, from Lemma \ref{l} we get:
$$H^{f}(X,Y)+\widetilde{S_B}(X,Y)-\frac{m}{\widetilde{\varphi}}H^{\varphi}(X,Y)+\lambda g_B(X,Y)+\mu df(X)df(Y)=0$$
$$H^{f}(V,W)+\widetilde{S_F}(V,W)-\pi^*[\varphi\Delta(\varphi)+(m-1)|grad(\varphi)|^2-\lambda \varphi^2]|_Fg(V,W)=0$$
and using the fact that
$$H^{f}(V,W)=(Hess(\widetilde{f}))(V,W)=g(\nabla_V(grad(\widetilde{f})),W)
=\pi^*[\frac{(grad(f))(\varphi)}{\varphi}]|_F\widetilde{\varphi}^2g_F(V,W),$$
we obtain:
$$\widetilde{S_F}(V,W)=\pi^*[\varphi\Delta(\varphi)+(m-1)|grad(\varphi)|^2-\varphi(grad(f))(\varphi)-\lambda \varphi^2]|_Fg_F(V,W).$$

Conversely, notice that the left-hand side term in (\ref{e31}) computed in $(X,V)$, for $X\in \mathcal{L}(B)$ and $V\in \mathcal{L}(F)$ vanishes identically and using again Lemma \ref{l}, for each situation $(X,Y)$ and $(V,W)$, we can recover the equation (\ref{e31}) from (\ref{e32}) and (\ref{e44}).
\end{proof}

\begin{remark}\label{r1}
In the case of product manifold (for $\varphi =1$), notice that the equation (\ref{e36}) defines a gradient $\eta$-Ricci soliton on $B$ and the chosen manifold $(F, g_F)$ is Einstein ($S_F=-\lambda g_F$), so a gradient $\eta$-Ricci soliton on the product manifold $B\times F$ can be naturally obtained by "lifting" a gradient $\eta$-Ricci soliton on $B$.
\end{remark}

\begin{remark}
If for the function $\varphi$ and $f$ on $B$ there exists two constants $a$ and $b$ such that $\nabla (grad(\varphi))=\varphi [aI_{\chi(B)}+bdf\otimes grad(f)]$, then $Hess(\varphi)=\varphi (ag_B+bdf\otimes df)$ and $(g_B,grad(f),\lambda-ma,\mu-mb)$ is a gradient $\eta$-Ricci soliton on $B$.
\end{remark}

Let us make some remark on the class of manifolds that satisfy the condition (\ref{e36}):
\begin{equation}\label{e37}
S_B+Hess(f)-\frac{m}{\varphi}Hess(\varphi)+\lambda g_B+\mu df\otimes df=0,
\end{equation}
for $\varphi>0$, $f$ smooth functions on the oriented and compact Riemannian manifold $(B,g_B)$, $\lambda$, $\mu$
and $m>1$ real constants. Denote by $\xi:=grad(f)$.

Taking the trace of (\ref{e37}), we obtain:
\begin{equation}\label{e50}
scal_B+\Delta(f)-m\frac{\Delta(\varphi)}{\varphi}+n\lambda+\mu |\xi|^2=0.
\end{equation}

Remark that:
\begin{equation}\label{e47}
|Hess(f)-\frac{\Delta(f)}{n}g_B|^2:=\sum_{1\leq i,j\leq n}[Hess(f)(E_i,E_j)-\frac{\Delta(f)}{n}g_B(E_i,E_j)]^2=
\end{equation}
$$
=|Hess(f)|^2-2\frac{\Delta(f)}{n}\sum_{i=1}^ng_B(\nabla_{E_i}\xi,E_i)+\frac{(\Delta(f))^2}{n}=|Hess(f)|^2-\frac{(\Delta(f))^2}{n}.$$

Also:
$$(div(Hess(f)))(\xi):=\sum_{i=1}^n(\nabla_{E_i}(Hess(f)))(E_i,\xi)=$$$$=\sum_{i=1}^n[E_i(Hess(f)(E_i,\xi))-Hess(f)(E_i,\nabla_{E_i}\xi)]=$$
$$=\sum_{i=1}^nE_i(g_B(\nabla_{E_i}\xi,\xi))-\sum_{i=1}^ng_B(\nabla_{E_i}\xi,\nabla_{E_i}\xi)=
\sum_{i=1}^ng_B(\nabla_{E_i}\nabla_{\xi}\xi,E_i)-|\nabla \xi|^2:=$$$$:=div(\nabla_{\xi}\xi)-|Hess(f)|^2$$
and
$$div(\nabla_{\xi}\xi):=\sum_{i=1}^ng_B(\nabla_{E_i}\nabla_{\xi}\xi, E_i)=\sum_{i=1}^nE_i(g_B(\nabla_{\xi}\xi,E_i))=\sum_{i=1}^nE_i(Hess(f)(\xi,E_i))
=$$$$=\sum_{i=1}^n(\nabla_{E_i}(Hess(f)(\xi)))E_i:=div(Hess(f)(\xi)),$$
therefore:
\begin{equation}\label{e40}
(div(Hess(f)))(\xi)=div(Hess(f)(\xi))-|Hess(f)|^2.
\end{equation}

Applying the divergence to (\ref{e37}), computing it in $\xi$ and considering (\ref{e20}), we get:
\begin{equation}\label{e38}
(div(Hess(f)))(\xi)=-(div(S_B))(\xi)+m(div(\frac{Hess(\varphi)}{\varphi}))(\xi)-\mu (\frac{1}{2}d(|\xi|^2)+\Delta(f)df)(\xi)=
\end{equation}
$$=-\frac{d(scal_B)(\xi)}{2}+\frac{m}{\varphi}(div(Hess(\varphi)))(\xi)-\frac{m}{\varphi^2}Hess(\varphi)(grad(\varphi),\xi)-\mu [\frac{1}{2}d(|\xi|^2)(\xi)+\Delta(f)|\xi|^2]=$$
$$=-\frac{d(scal_B)(\xi)}{2}+m\cdot div(Hess(\varphi)(\frac{\xi}{\varphi}))-\frac{m}{\varphi}\langle Hess(f), Hess(\varphi)\rangle-\mu [\frac{1}{2}d(|\xi|^2)(\xi)+\Delta(f)|\xi|^2].$$

From (\ref{e50}), (\ref{e47}), (\ref{e40}) and (\ref{e38}), we obtain:
\begin{equation}\label{e51}
div(Hess(f)(\xi))=|Hess(f)-\frac{\Delta(f)}{n}g_B|^2-\frac{scal_B}{n}\Delta(f)+\frac{m}{n}\frac{\Delta(\varphi)}{\varphi}\Delta(f)-div(\lambda \xi)-
\end{equation}
$$-\frac{d(scal_B)(\xi)}{2}+m\cdot div(Hess(\varphi)(\frac{\xi}{\varphi}))-\frac{m}{\varphi}\langle Hess(f), Hess(\varphi)\rangle-\frac{\mu}{2} d(|\xi|^2)(\xi)- \frac{n+1}{n}\mu |\xi|^2 \Delta(f).$$

Integrating with respect to the canonical measure on $B$, we have:
$$\int_Bd(scal_B)(\xi)=\int_B\langle grad(scal_B), \xi\rangle=-\int_B\langle scal_B, div(\xi)\rangle=-\int_Bscal_B \cdot \Delta(f)$$
and similarly:
$$\int_Bd(|\xi|^2)(\xi)=\int_B\langle grad(|\xi|^2), \xi\rangle=-\int_B\langle |\xi|^2, div(\xi)\rangle=-\int_B|\xi|^2 \cdot \Delta(f).$$

Using:
$$|\xi|^2\cdot div(\xi)=div(|\xi|^2\xi)-|\xi|^2$$
and integrating (\ref{e51}) on $B$, from the above relations and the divergence theorem, we obtain:
\begin{equation}\label{e56}
\frac{n-2}{2n}\int_B \langle grad(scal_B), \xi\rangle=\int_B |Hess(f)-\frac{\Delta(f)}{n}g_B|^2-m\int_B \frac{1}{\varphi}\langle Hess(f), Hess(\varphi)\rangle+
\end{equation}
$$+\frac{m}{n}\int_B \frac{\Delta(\varphi)}{\varphi}\Delta(f)+\frac{n+2}{2n}\mu\int_B |\xi|^2.$$

\begin{proposition}
Let $(B,g_B)$ be an oriented and compact Riemannian manifold, $f$ a smooth function on $B$, let $m>1$, $\lambda$, $\mu$ be real constants satisfying (\ref{e36}) (for $\varphi=1$) and $(F,g_F)$ be an $m$-dimensional Einstein manifold with $S_F=-\lambda g_F$. If $(g,\xi,\lambda,\mu)$ is a gradient $\eta$-Ricci soliton on the product manifold $(B\times F,g)$, where $\xi=grad(\widetilde{f})$ and the $1$-form $\eta$ is the $g$-dual of $\xi$, then:
\begin{equation}\label{e57}
\frac{n-2}{2n}\int_B \langle grad(scal_B), \xi\rangle=\int_B |Hess(f)-\frac{\Delta(f)}{n}g_B|^2+\frac{n+2}{2n}\mu\int_B |\xi|^2.
\end{equation}
\end{proposition}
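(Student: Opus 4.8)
The plan is to obtain (\ref{e57}) as the direct specialization of the general integral identity (\ref{e56}) to the product case $\varphi\equiv 1$. First I would verify that the hypotheses match up: under the assumptions of the Proposition, equation (\ref{e36}) holds with $\varphi=1$, which is precisely the condition (\ref{e37}) on the oriented compact manifold $(B,g_B)$ used to derive (\ref{e56}). By Remark \ref{r1}, the choice $S_F=-\lambda g_F$ together with (\ref{e36}) at $\varphi=1$ guarantees that $(g,\xi,\lambda,\mu)$ is indeed a gradient $\eta$-Ricci soliton on $B\times F$ with $\xi=grad(\widetilde{f})$, so the setup is consistent and (\ref{e56}) is genuinely applicable.

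The key observation is that a constant warping function has vanishing first and second derivatives: since $\varphi\equiv 1$, both $Hess(\varphi)=0$ and $\Delta(\varphi)=0$. Substituting into (\ref{e56}), the term $-m\int_B\frac{1}{\varphi}\langle Hess(f),Hess(\varphi)\rangle$ vanishes because $Hess(\varphi)=0$, and the term $\frac{m}{n}\int_B\frac{\Delta(\varphi)}{\varphi}\Delta(f)$ vanishes because $\Delta(\varphi)=0$. What remains on the right-hand side is exactly $\int_B|Hess(f)-\frac{\Delta(f)}{n}g_B|^2+\frac{n+2}{2n}\mu\int_B|\xi|^2$, while the left-hand side is unchanged, yielding (\ref{e57}).

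There is no genuine obstacle here: the proposition is an immediate corollary of (\ref{e56}) once the two warping-dependent terms are dropped. The only point worth checking carefully is that the chain of integration identities leading to (\ref{e56})---the trace relation (\ref{e50}), the pointwise expansions (\ref{e47}), (\ref{e40}), (\ref{e38}), and the subsequent integration by parts via the divergence theorem---remains valid in the product case. This it does verbatim, since those steps rely only on compactness and orientability of the base together with the soliton equation (\ref{e36}), all of which are retained when $\varphi\equiv 1$; no additional analytic work is required.
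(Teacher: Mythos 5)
Your proposal is correct and matches the paper's (implicit) argument exactly: the paper states this proposition immediately after deriving the general identity (\ref{e56}) and offers no separate proof, precisely because setting $\varphi\equiv 1$ kills the two terms involving $Hess(\varphi)$ and $\Delta(\varphi)$ and leaves (\ref{e57}). Your additional check that the derivation of (\ref{e56}) remains valid verbatim for constant $\varphi$ is a sensible, harmless verification.
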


Let now consider the product manifold $B\times F$, in which case (\ref{e50}) (for $\varphi=1$) becomes:
\begin{equation}
scal_B+\Delta(f)+n\lambda+\mu |\xi|^2=0
\end{equation}
and integrating it on $B$, we get:
\begin{equation}
\mu\int_B |\xi|^2=-\int_B scal_B-n\lambda \cdot vol(B).
\end{equation}

Replacing it in (\ref{e57}), we obtain:
\begin{equation}\label{e6}
\frac{n-2}{2n}\int_B \langle grad(scal_B), \xi\rangle+\frac{n+2}{2n}\int_B scal_B=\int_B |Hess(f)-\frac{\Delta(f)}{n}g_B|^2-\frac{n+2}{2}\lambda \cdot vol(B).
\end{equation}

\begin{proposition}
Let $(B,g_B)$ be an oriented, compact and complete $n$-dimensional ($n>1$) Riemannian manifold of constant scalar curvature, $\varphi>0$, $f$ two smooth functions on $B$, let $m>1$, $\lambda$, $\mu$ be real constants satisfying (\ref{e37}). If one of the following two conditions hold:
\begin{enumerate}
  \item $\varphi=1$ and $\lambda=-\frac{scal_B}{n}$;
  \item there exists a positive function $h$ on $B$ such that $Hess(f)=-h\cdot Hess(\varphi)$ and $\mu\geq 0$,
\end{enumerate}
then $B$ is conformal to a sphere in the $(n+1)$-dimensional Euclidean space.
\end{proposition}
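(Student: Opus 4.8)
The plan is to show, under either hypothesis, that the trace-free part of the Hessian of $f$ vanishes identically, i.e. $Hess(f)=\frac{\Delta(f)}{n}g_B$, and then to invoke the classical characterization of complete manifolds carrying a non-constant function with vanishing trace-free Hessian. The common reduction is that constant scalar curvature forces $grad(scal_B)=0$, so the term $\int_B\langle grad(scal_B),\xi\rangle$ on the left of both (\ref{e56}) and (\ref{e6}) drops out.

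For the first case ($\varphi=1$, $\lambda=-\frac{scal_B}{n}$) I would start from (\ref{e6}). Since $scal_B$ is constant, the left-hand side reduces to $\frac{n+2}{2n}scal_B\cdot vol(B)$, while the last term on the right is $-\frac{n+2}{2}\lambda\cdot vol(B)$. Substituting $\lambda=-\frac{scal_B}{n}$, these two contributions cancel exactly, leaving $\int_B|Hess(f)-\frac{\Delta(f)}{n}g_B|^2=0$. As the integrand is a non-negative function, it must vanish pointwise, giving $Hess(f)=\frac{\Delta(f)}{n}g_B$.

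For the second case I would work with (\ref{e56}), whose left-hand side again vanishes. The key move is to feed in the relation $Hess(f)=-h\cdot Hess(\varphi)$, equivalently $Hess(\varphi)=-\frac{1}{h}Hess(f)$, together with its trace $\Delta(\varphi)=-\frac{1}{h}\Delta(f)$. Then $-\frac{m}{\varphi}\langle Hess(f),Hess(\varphi)\rangle=\frac{m}{\varphi h}|Hess(f)|^2$ and $\frac{m}{n}\frac{\Delta(\varphi)}{\varphi}\Delta(f)=-\frac{m}{n}\frac{(\Delta(f))^2}{\varphi h}$. Using the pointwise identity $|Hess(f)|^2=|Hess(f)-\frac{\Delta(f)}{n}g_B|^2+\frac{(\Delta(f))^2}{n}$ from (\ref{e47}), the two terms carrying $(\Delta(f))^2$ cancel and (\ref{e56}) collapses to
\begin{equation*}
0=\int_B\Big|Hess(f)-\tfrac{\Delta(f)}{n}g_B\Big|^2+m\int_B\tfrac{1}{\varphi h}\Big|Hess(f)-\tfrac{\Delta(f)}{n}g_B\Big|^2+\tfrac{n+2}{2n}\mu\int_B|\xi|^2.
\end{equation*}
Because $\varphi>0$, $h>0$, $m>1$ and $\mu\geq 0$, every summand is non-negative, so each vanishes; in particular the first forces $Hess(f)=\frac{\Delta(f)}{n}g_B$ once more.

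In both cases we have produced a function $f$ on the complete, compact manifold $B$ with vanishing trace-free Hessian, so $f$ is a concircular scalar field. Assuming $f$ is non-constant, the classification of such fields on complete manifolds (Tashiro/Obata) leaves the sphere as the only compact model, whence $B$ is conformal to a sphere in $\mathbb{R}^{n+1}$. I expect the displayed computations to be routine bookkeeping, so the main obstacle is the closing step: correctly invoking the external rigidity theorem, and in particular ruling out the degenerate possibility $f\equiv const$ (equivalently $\xi=0$), which would trivialize the argument and must be excluded from the hypotheses or handled separately.
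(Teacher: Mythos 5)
Your proof is correct and follows essentially the same route as the paper: specialize (\ref{e6}) in case 1 and (\ref{e56}) in case 2, use constancy of $scal_B$ to kill the left-hand side, conclude $Hess(f)=\frac{\Delta(f)}{n}g_B$ from a sum of non-negative integrals vanishing, and cite the Yano--Obata result. In fact your case-2 bookkeeping is slightly more careful than the paper's (the paper's displayed identity silently drops the non-negative term $m\int_B\frac{1}{\varphi h}|Hess(f)-\frac{\Delta(f)}{n}g_B|^2$, which does not affect the conclusion), and your closing caveat about excluding $f\equiv const$ is a legitimate point that the paper also leaves unaddressed.
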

\begin{proof}
\begin{enumerate}
         \item From (\ref{e6}) we obtain:
$$\int_B |Hess(f)-\frac{\Delta(f)}{n}g_B|^2=\frac{n+2}{2}(\frac{scal_B}{n}+\lambda)\mu\cdot vol(B),$$
so $Hess(f)=\frac{\Delta(f)}{n}g_B$ which implies by \cite{ya} that $B$ is conformal to a sphere in the $(n+1)$-dimensional Euclidean space.
         \item From the condition $Hess(f)=-h\cdot Hess(\varphi)$ we obtain $\Delta(f)=-h \Delta(\varphi)$ and replacing them in (\ref{e56}), we get:
$$\int_B |Hess(f)-\frac{\Delta(f)}{n}g_B|^2+\frac{n+2}{2n}\mu \int_B |\xi|^2=0.$$

     From $\mu\geq0$ we deduce that $Hess(f)=\frac{\Delta(f)}{n}g_B$ and according to \cite{ya}, we get the conclusion.
      \end{enumerate}
\end{proof}

Finally, we state a result on the scalar curvature of a product manifold admitting an $\eta$-Ricci soliton:
\begin{proposition}
Let $(B,g_B)$ be an oriented and compact Riemannian manifold of constant scalar curvature, $f$ a smooth function on $B$, let $m>1$, $\lambda$, $\mu$ be real constants satisfying (\ref{e36}) (for $\varphi=1$) and $(F,g_F)$ be an $m$-dimensional Einstein manifold with $S_F=-\lambda g_F$. If $(g,\xi,\lambda,\mu)$ is a gradient $\eta$-Ricci soliton on the product manifold $(B\times F,g)$, where $\xi=grad(\widetilde{f})$ and the $1$-form $\eta$ is the $g$-dual of $\xi$, then the scalar curvature of $B\times F$ is $\geq -(n+m)\lambda$.
\end{proposition}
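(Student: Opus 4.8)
The plan is to reduce the asserted bound for the scalar curvature of $B\times F$ to a single scalar inequality on the base, namely $scal_B\geq -n\lambda$, and then to extract this inequality by playing the integrated soliton trace equation against the integral identity (\ref{e57}) already established for the product case.

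First I would compute the scalar curvature of the product. Setting $\varphi=1$ in Lemma \ref{l}, all Hessian and gradient terms of $\varphi$ vanish, so $S(X,Y)=\widetilde{S_B}(X,Y)$, $S(X,V)=0$ and $S(V,W)=\widetilde{S_F}(V,W)$. Tracing with respect to $g$ over an orthonormal frame split into its horizontal and vertical parts (with $\varphi=1$ a $g$-orthonormal vertical frame is $g_F$-orthonormal) gives $scal_{B\times F}=scal_B+scal_F$. Since $(F,g_F)$ is Einstein with $S_F=-\lambda g_F$, tracing yields $scal_F=-m\lambda$, hence $scal_{B\times F}=scal_B-m\lambda$. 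Thus the claim $scal_{B\times F}\geq -(n+m)\lambda$ is equivalent to showing $scal_B\geq -n\lambda$.

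Next I would exploit the constant scalar curvature hypothesis on $B$. Because $scal_B$ is constant, $grad(scal_B)=0$, so the left-hand side of (\ref{e57}) vanishes and that identity collapses to
$$\frac{n+2}{2n}\mu\int_B |\xi|^2=-\int_B |Hess(f)-\frac{\Delta(f)}{n}g_B|^2\leq 0,$$
which forces $\mu\int_B|\xi|^2\leq 0$. Then I would integrate the trace identity (\ref{e50}) in its $\varphi=1$ form, $scal_B+\Delta(f)+n\lambda+\mu|\xi|^2=0$, over the compact manifold $B$: the term $\int_B\Delta(f)$ vanishes by the divergence theorem, leaving $(scal_B+n\lambda)\,vol(B)=-\mu\int_B|\xi|^2\geq 0$. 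Since $vol(B)>0$ and $scal_B$ is constant, this gives $scal_B\geq -n\lambda$, and combining with the first step yields $scal_{B\times F}=scal_B-m\lambda\geq-(n+m)\lambda$.

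The main obstacle is not computational but conceptual: one must recognize that the constant scalar curvature assumption is precisely what pins down the sign of $\mu\int_B|\xi|^2$ via (\ref{e57}). Without it, neither the sign of $\mu$ nor the sign of $\int_B|\xi|^2$-weighted quantities is individually controlled, and the integrated trace equation alone cannot produce a definite bound. Once this sign is secured, the remaining steps are routine integrations over the compact base together with the standard product formula for scalar curvature.
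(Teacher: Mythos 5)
Your proof is correct and follows essentially the same route as the paper: the paper's equation (\ref{e6}) is precisely the combination of (\ref{e57}) with the integrated trace identity that you carry out in two separate steps, and setting $grad(scal_B)=0$ in either formulation yields $\frac{n+2}{2}\left(\frac{scal_B}{n}+\lambda\right)vol(B)=\int_B |Hess(f)-\frac{\Delta(f)}{n}g_B|^2\geq 0$. Your explicit verification that $scal_{B\times F}=scal_B+scal_F=scal_B-m\lambda$ via Lemma \ref{l} with $\varphi=1$ is a useful detail the paper leaves implicit.
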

\begin{proof}
From (\ref{e6}) we deduce that $\frac{n+2}{2}(\frac{scal_B}{n}+\lambda)\cdot vol(B)=\int_B |Hess(f)-\frac{\Delta(f)}{n}g_B|^2\geq 0$ and since $scal_F=-m\lambda$, we get the conclusion.
\end{proof}

\bigskip

We end these considerations by giving an example of gradient $\eta$-Ricci soliton on a product manifold.

\begin{example}
Let $(g_M, \xi_M, 1,1)$ be the gradient $\eta$-Ricci soliton on the Riemannian manifold $M=\{(x,y,z)\in\mathbb{R}^3, z> 0\}$, where $(x,y,z)$ are the standard coordinates in $\mathbb{R}^3$, with the metric $g_M:=\frac{1}{z^2}(dx\otimes dx+dy\otimes dy+dz\otimes dz)$ (given by Example \ref{exa}) and let $S^3$ be the $3$-sphere with the round metric $g_S$ (which is Einstein with the Ricci tensor equals to $2g_S$). By Remark \ref{r1} we obtain the gradient $\eta$-Ricci soliton $(g, \xi, 1,1)$ on the "generalized cylinder" $M\times S^3$, where $g=g_M+g_S$ and $\xi$ is the lift on $M\times S^3$ of the gradient vector field $\xi_M=grad(f)$, where $f(x,y,z):=-\ln z$.
\end{example}

\small{

\bigskip

\textit{Adara M. Blaga}

\textit{Department of Mathematics}

\textit{West University of Timi\c{s}oara}

\textit{Bld. V. P\^{a}rvan nr. 4, 300223, Timi\c{s}oara, Rom\^{a}nia}

\textit{adarablaga@yahoo.com}
}

\end{document}